\date{March 21, 2009}
\title{Crossings, colorings, and cliques}
\author{
Michael O. Albertson \thanks{
Department of Mathematics and Statistics,
Smith College, Northampton, MA 01063. 
Email: albertson@math.smith.edu.}
\and
Daniel W. Cranston \thanks{Center for Discrete Math and Theoretical Computer Science,
Rutgers University, Piscataway, NJ 08854. E-mail:
dcransto@dimacs.rutgers.edu.}
\and
Jacob Fox \thanks{Department of Mathematics,
Princeton University, Princeton, NJ 08544. E-mail:
jacobfox@math.princeton.edu.
Research supported by an NSF Graduate Research Fellowship and a Princeton Centennial Fellowship.}
}
\newtheorem{theorem}{Theorem}
\newtheorem{lemma}{Lemma}
\newtheorem{proposition}[theorem]{Proposition}
\newtheorem*{claim*}{Claim}
\newtheorem{conj}{Conjecture}
\def\cn{\mbox{\rm cr}}
\def\floor#1{\bigg\lfloor #1 \bigg\rfloor}
\def\etal{{\em et al.}}
\begin{document}
\maketitle

\begin{abstract}
Albertson conjectured that if graph $G$ has chromatic number $r$, then the
crossing number of $G$ is at least that of the complete graph $K_r$. This conjecture in
the case $r=5$ is equivalent to the four color theorem. It was verified for $r=6$ by 
Oporowski and Zhao. 
In this paper, we prove the conjecture for $7 \leq r \leq 12$ using results of Dirac; Gallai; and Kostochka and Stiebitz that give lower bounds on the
number of edges in critical graphs, together with lower bounds by Pach \etal~on the
crossing number of graphs in terms of the number of edges and vertices. 
\end{abstract}

\section{Introduction}  

For more than a century, from Kempe through Appel and Haken and continuing to the present, the Four Color Problem~\cite{AH, RSST} has played a leading role in the development of graph theory.  
For background we recommend the book by Jensen and Toft~\cite{JT}.  
\smallskip

There are three classic relaxations of planarity.  The first is that of a graph embedded on an arbitrary surface.  Here Heawood established an upper bound for the number of colors needed to color any embedded graph.  About forty years ago Ringel and Youngs completed the work of showing that the Heawood bound is (with the exception of Klein's bottle) sharp.  Shortly thereafter Appel and Haken proved the Four Color Theorem.  One consequence of these results is that the maximum chromatic number of a graph embedded on any given surface is achieved by a complete graph.  Indeed, with the exception of the plane and Klein's bottle, a complete graph is the only critical graph with maximum chromatic number that embeds on a given surface.
\smallskip

The second classic relaxation of planarity is thickness, the minimum number of planar subgraphs needed to partition the edges of the graph.  It is well known that thickness $2$ graphs are $12$-colorable and that $K_8$ is the largest complete graph with thickness $2$.  Sulanke showed that the  $9$-chromatic join of $K_6$ and $C_5$ has thickness $2$~\cite{Ga}.  Thirty years later Boutin, Gethner, and Sulanke  \cite{BGS} constructed infinitely many $9$-chromatic critical graphs of thickness $2$.  Using Euler's Polyhedral Formula, it is straightforward to show that if $G$ has thickness $t$, then $G$ is $6t$-colorable.  When $t \ge 3$, we do not know whether complete graphs have the maximum chromatic number among all graphs of thickness $t$.  We do know that if $t \ge 3$, then $K_{6t-2}$ is the largest complete graph with thickness $t$~\cite{AG}.
\smallskip

The third classic relaxation of planarity is crossing number.  The {\it crossing number} of a graph $G$, denoted by $\cn(G)$, is defined as the minimum number of crossings in a drawing of $G$.  There are subtleties to this definition and we suggest Szekely's survey~\cite{Sz} and its references for a look at foundational issues related to the crossing number and a survey of recent results.  A bibliography of papers on crossing number can be found  at \cite{V}. Surprisingly, there are only two papers that relate crossing number with chromatic number \cite{A,OZ}.  Since these papers are not well known,  we briefly review some of their results to set the context for our work. 
\smallskip

Perhaps the first question one might ask about the connections between the chromatic number and the crossing number is whether the chromatic number is bounded by a function of the crossing number.
Albertson~\cite{A} conjectured that $\chi(G) = {\rm O(cr}(G)^{1/4})$ and this was shown by Schaefer~\cite{S}.  In Section~5, we give a short proof of this fact.
The result $\chi(G) = {\rm O(cr}(G)^{1/4})$ is best possible, since $\chi(K_n)=n$ and $\cn(K_n)\le \binom{|E(K_n)|}{2}
=\binom{\binom{n}{2}}{2} \le \frac{n^4}{8}$.
\smallskip

Although few exact values are known for the crossing number of complete graphs, the asymptotics of this problem are well-studied.  
Guy conjectured \cite{G} that the crossing number of the complete graph is as follows.
\begin{conj}[Guy]
\begin{equation}\label{complete} \cn(K_n) = \frac{1}{4}\floor{\frac{n}{2}}\floor{\frac{n-1}{2}}\floor{\frac{n-2}{2}} \floor{\frac{n-3}{2}}.
\end{equation} 
\end{conj}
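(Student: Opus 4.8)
The plan here is necessarily different from a standard proof, since \eqref{complete} is a well-known open problem rather than a settled theorem: I cannot offer a complete proof, and the honest task is to describe how one would attack it and to isolate the obstacle. What is genuinely available is the upper bound for every $n$, together with exact verification for small $n$, so I would organize the argument around these two fronts, flagging at the outset that a matching lower bound valid for all $n$ is precisely what keeps the conjecture open.

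\textbf{Upper bound.} I would first establish $\cn(K_n)\le\frac14\floor{\frac n2}\floor{\frac{n-1}2}\floor{\frac{n-2}2}\floor{\frac{n-3}2}$ by exhibiting Guy's ``cylindrical'' drawing. For $n=2m$, place $m$ vertices in convex position on an inner circle and $m$ on a concentric outer circle; draw each edge joining two inner (resp. two outer) vertices on the inside (resp. outside) along its shorter arc, and route the $m^2$ inner--outer edges monotonically around the annulus. The crossings then split into three types --- inner--inner, outer--outer, and inner--outer --- each of which reduces to a short sum of products of binomial coefficients; adding these up gives exactly the right-hand side. The case $n=2m+1$ follows from the even case by a deletion/averaging argument: in any optimal drawing of $K_{n+1}$ each crossing involves $4$ distinct vertices, so summing over vertices shows some vertex carries at least a $\tfrac4{n+1}$ fraction of the crossings; deleting it yields $\cn(K_n)\le\frac{n-3}{n+1}\cn(K_{n+1})$, and one checks that $\frac{n-3}{n+1}$ times the (even) formula for $n+1$ equals the formula for $n$.

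\textbf{Lower bound.} This is the hard part and the reason the identity is still conjectural. The basic tool is the same counting idea run in the opposite direction: restricting an optimal drawing of $K_n$ to each $(n-1)$-vertex subset and summing gives $(n-4)\cn(K_n)\ge n\,\cn(K_{n-1})$, i.e.\ $\cn(K_n)\ge\frac{n}{n-4}\cn(K_{n-1})$, which iterated from $\cn(K_5)=1$ yields $\cn(K_n)\ge\frac15\binom n4$. This has the correct order of growth but only the fraction $\tfrac{8}{15}\approx 53\%$ of the conjectured leading constant (the conjectured value is asymptotic to $\tfrac38\binom n4$), and no argument --- combinatorial or otherwise --- is known that closes this gap for every $n$. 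What can be done is (i) to settle small $n$ exactly by computer search, so that \eqref{complete} is known through $n=12$, which is exactly the range the present paper relies on, and (ii) to improve the asymptotic constant $\lim_n \cn(K_n)/\binom n4$ via semidefinite-programming and flag-algebra relaxations, which have raised the lower bound to a value strictly between $\tfrac15$ and the conjectured $\tfrac38$ but not up to it. The main obstacle, therefore, is producing a lower bound that matches Guy's drawing for all $n$ rather than merely asymptotically or in finitely many cases.
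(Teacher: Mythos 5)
You cannot be faulted for not proving this statement: it is Guy's Conjecture, which the paper itself states as an open problem and does not prove, and your account matches the paper's treatment exactly --- the upper bound via the two-concentric-circles (cylindrical) drawing, the counting recursion $\cn(K_n)\ge\frac{n}{n-4}\cn(K_{n-1})$ yielding $\frac{1}{5}\binom{n}{4}$, the exact verification only for $n\le 12$ (Guy for $n\le 10$, Pan--Richter for $n=11,12$), and the semidefinite-programming improvements to the asymptotic constant. Your identification of the missing matching lower bound as the obstruction is precisely why the paper can only cite the conjectured values and must work instead with the known cases $r\le 12$.
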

He verified this conjecture for $n \leq 10$ and Pan and Richter \cite{PR} recently confirmed it for $n = 11,12$.
Let $f(n)$ denote the right hand side of equation~\eqref{complete}.  It is easy to show that $f(n)$ is an upper bound for $\cn(K_n)$, by considering a particular drawing of $K_n$ where the vertices are equally spaced around two concentric circles.

Kleitman proved that  $\lim_{n\to\infty}cr(K_n)/f(n)\geq 0.80$ \cite{kleitman}.  
Recently de Klerk \etal
~\cite{KMPRS} 
strengthened this lower bound to $0.83$.
By refining the techniques in~\cite{KMPRS}, de Klerk, Pasechnik, and Schrijver~\cite{KPS} further improved the lower bound to $0.8594$.  
\smallskip

The next natural step would be to determine exact values of the maximum chromatic number for small numbers of crossings.  An easy application of the Four Color Theorem shows that if $\cn(G) = 1$, then $\chi(G) \leq 5$. Oporowski and Zhao~\cite{OZ} showed that the conclusion also holds when $\cn(G) = 2$.  They further showed that if $\cn(G) = 3$ and $G$ does not contain a copy of $K_6$,  then $\chi(G) \leq 5$; they conjectured that this conclusion remains true even if $\cn(G)\in\{4, 5\}$.
Albertson, Heenehan, McDonough, and Wise~\cite{AHMW} showed that if $\cn(G) \leq 6$, then $\chi(G) \leq 6$.
\smallskip

The relationship between pairs of crossings was first studied by Albertson \cite{A}.  Given a drawing of graph $G$, each crossing is uniquely determined by the {\it cluster} of four vertices that are the endpoints of the crossed edges.  Two crossings are said to be {\it dependent} if the corresponding clusters have at least one vertex in common, and a set of crossings is said to be {\it independent} if no two are dependent.   Albertson gave an elementary argument proving that if $G$ is a graph that has a drawing in which all crossings are independent, then $\chi(G) \leq 6$.  He also showed that if $G$ has a drawing with three crossings that are independent, then $G$ contains an independent set of vertices one from each cluster.  Since deleting this independent set leaves a planar graph, $\chi(G) \leq 5$.   He conjectured that if $G$ has a drawing in which all crossings are independent, then $\chi(G) \leq 5$.  Independently, Wenger~\cite{W} and Harmon~\cite{H} showed that any graph with four independent crossings has an independent set of vertices one from each cluster, but there exists a graph with five independent crossings that contains no independent set of vertices one from each cluster.  Finally, Kr${\rm \acute{a}}$l and Stacho~\cite{KSta} proved the conjecture that if $G$ has a drawing in which all crossings are independent, then $\chi(G) \leq 5$.
\smallskip

At an AMS special session in Chicago in October of 2007, Albertson conjectured the following.
\begin{conj}[Albertson]
If $\chi(G) \geq r$, then $ \cn(G) \geq \cn(K_r)$.  
\label{albertson}
\end{conj}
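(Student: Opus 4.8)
The plan is to reduce to colour-critical graphs and then to split on the number of vertices. Since $\cn$ cannot increase when a vertex or an edge is deleted, and every graph with $\chi\ge r$ contains an $r$-critical subgraph, one may assume $G$ itself is $r$-critical; write $n=|V(G)|$ and $m=|E(G)|$. If $n=r$ then $G=K_r$ and $\cn(G)=\cn(K_r)$, and it is a standard fact that no $r$-critical graph has exactly $r+1$ vertices, so it remains to prove $\cn(G)\ge\cn(K_r)$ when $n\ge r+2$. Because $\cn(K_r)$ equals the explicit quantity $f(r)$ for $r\le 12$ (by Guy for $r\le 10$ and by Pan and Richter for $r=11,12$), it is enough to show $\cn(G)\ge f(r)$ in this range.

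Assume $n\ge r+2$. The engine is to feed lower bounds on $m$ coming from the structure of colour-critical graphs into lower bounds on $\cn(G)$ in terms of $m$ and $n$. On the edge side: Dirac's inequality $2m\ge(r-1)n+(r-3)$ holds for every $r$-critical graph other than $K_r$; Gallai's inequality $2m\ge(r-1)n+(n-r)(2r-n)-2$ holds in the range $r+2\le n\le 2r-1$ and is considerably larger near $n=r+2$; and Kostochka and Stiebitz improve Dirac's bound to $2m\ge(r-1)n+2(r-3)$ for every $r$-critical graph that is neither $K_r$ nor obtained from smaller graphs by a \hajos{} join (the excess $2m-(r-1)n$ being additive under such joins, which one combines with these bounds). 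On the crossing side: the elementary estimate $\cn(G)\ge m-3(n-2)$, the stronger linear estimates $\cn(G)\ge\alpha m-\beta(n-2)$ of Pach \etal, each valid once $m/n$ passes a suitable threshold, and, for dense $G$, the crossing lemma $\cn(G)\ge\frac{m^{3}}{33.75\,n^{2}}-0.9n$.

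The graphs built from copies of $K_r$ by \hajos{} joins are handled directly: such a graph formed from $t\ge 2$ copies contains two edge-disjoint copies of $K_r$ with an edge removed, each drawn with at least $\cn(K_r-e)$ crossings, so $\cn(G)\ge 2\,\cn(K_r-e)$, and one verifies $2\,\cn(K_r-e)\ge\cn(K_r)$ for $7\le r\le 12$. Every other $r$-critical graph with $n\ge r+2$ has $2m\ge(r-1)n+2(r-3)$, so $m$ exceeds $\tfrac12(r-1)n$ by a positive amount. For $r=7$ this already gives $m\ge 3n+4$, whence the elementary estimate yields $\cn(G)\ge 10>9=f(7)$. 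For $8\le r\le 12$ one substitutes the edge bound (Gallai's when $n\le 2r-1$, Kostochka--Stiebitz's otherwise) into whichever crossing estimate is strongest at the implied density: once $n$ exceeds an explicit bound this makes the estimate grow past the constant $f(r)$, and the finitely many remaining $n$ are checked by hand, the tightest instances being the densest small critical graphs, such as the join $C_5+K_{r-3}$ at $n=r+2$.

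\textbf{The main obstacle is this medium range of $n$.} There $m$ is only a bounded amount above $\tfrac12(r-1)n$, so no single tool is strong enough: the crossing lemma is far too weak (for $r=7$ essentially vacuous), the elementary estimate alone fails near $n=r+2$ for larger $r$, and closing the gap requires pairing the sharpest edge bound available in each range (Gallai or Kostochka--Stiebitz, not merely Dirac) with a Pach-type linear crossing estimate of precisely the right slope. It also depends on knowing $\cn(K_r)$ exactly---which is exactly why the argument stops at $r=12$.
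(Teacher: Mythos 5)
Your overall strategy --- reduce to $r$-critical graphs, rule out $n=r+1$, and then pump the Dirac/Gallai/Kostochka--Stiebitz edge bounds through the Euler and Pach-type crossing estimates, using the known values of $\cn(K_r)$ for $r\le 12$ --- is exactly the paper's. But there is a genuine gap in how you dispose of the exceptional case of the Kostochka--Stiebitz bound. The theorem actually available says: if $G$ is $r$-critical, $n\ge r+2$, and $n\ne 2r-1$, then $2m\ge (r-1)n+2(r-3)$. The excluded case is the single vertex count $n=2r-1$, not ``graphs obtained by a Haj\'os join,'' and these two classes are not known to coincide. Dirac's characterization covers only the graphs with excess exactly $r-3$; an $r$-critical graph on $2r-1$ vertices could a priori have excess anywhere strictly between $r-3$ and $2r-6$ and fail to be a Haj\'os join of two copies of $K_r$, in which case it is caught neither by your structural argument (which handles only Haj\'os joins of complete graphs, via $\cn(G)\ge 2\,\cn(K_r-e)$) nor by your version of the Kostochka--Stiebitz inequality. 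You would need to prove that every $r$-critical graph on $2r-1$ vertices with excess below $2r-6$ is such a join, and this does not follow from anything you cite.

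The gap bites hardest at $r=7$, $n=13$. There the only unconditional edge bound is Dirac's $m\ge 3n+2=41$, and $m-(3n-6)=8<9=\cn(K_7)$, while the Pach et al. inequalities are even weaker at this density; so your plan, which for $r=7$ leans entirely on $m\ge 3n+4$, has no tool for a hypothetical $13$-vertex $7$-critical graph of excess between $4$ and $7$ that is not a Haj\'os join. The paper closes exactly this one-unit deficit with Borodin's theorem that graphs drawable with each edge crossed at most once are $6$-colorable, which upgrades the Euler-type bound to $\cn(G)\ge m-3n+7$ for every $7$-chromatic graph; for $8\le r\le 12$ it instead handles $n=2r-1$ by feeding Dirac's bound alone into the appropriate Pach et al. inequality at that one value of $n$, which already clears $\cn(K_r)$ and requires no characterization of the exceptional graphs. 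Your Haj\'os-join computation is sound where it applies (edge-disjoint copies of $K_r-e$ do force $\cn(G)\ge 2\,\cn(K_r-e)$, and the needed lower bounds on $\cn(K_r-e)$ follow from the Euler bound or from $K_{r-1}\subseteq K_r-e$), but it does not cover the whole exceptional class, so the proof as written is incomplete.
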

At that meeting Schaefer observed that if $G$ contains a subdivision of $K_r$, then such a subdivision must have at least as many crossings as $K_r$ \cite{S}.  
A classic conjecture attributed to  Haj${\rm \acute{o}}$s was that if $G$ is $r$-chromatic, then $G$ contains a subdivision of $K_r$.  
Dirac \cite{D2} verified the conjecture for $r \leq 4$. 
In 1979, Catlin~\cite{C} noticed that the lexicographic product of $C_5$ and $K_3$ is an $8$-chromatic counterexample to the Haj${\rm \acute{o}}$s Conjecture.  
He generalized this construction to give counterexamples to Haj${\rm \acute{o}}$s'
 conjecture for all $r\geq 7$.
A couple of years later Erd\H{o}s and Fajtlowicz \cite{EF} proved that almost all graphs are counterexamples to Haj\'os' conjecture. However, Haj\'os' conjecture remains open for $r=5,6$. Note that if Haj${\rm \acute{o}}$s' conjecture does hold for a given $G$, then Alberton's conjecture also holds for that same $G$.  
This explains why Albertson's conjecture is sometimes referred to as the Weak Haj\'os Conjecture.
\smallskip

While exploring Conjecture~2, we've come to believe that a stronger statement is true.
Our purpose in this paper is to investigate whether for $r \geq 5$ the complete graphs are the unique critical $r$-chromatic graphs with minimum crossing number.  
While the statement of this problem is similar to that of the Heawood problem (that the chromatic number of any graph embeddable in a surface $S$ is at most the chromatic number of the largest complete graph embeddable in $S$), there are several difficulties that arise when trying to answer this problem. One difficulty is that we only conjecturally know the crossing number of the complete graph. In particular, recall that $\cn(K_n)$ is known only for $n \leq 12$ and even the results for n=11, 12 are quite recent~\cite{PR}. Furthermore, our understanding of crossing numbers for general graphs is even worse.
\smallskip

The rest of this paper is organized as follows. In Section \ref{sec2} 
we discuss known lower bounds on the number of edges in $r$-critical graphs. 
In Section \ref{sec3} we discuss known lower bounds on the crossing number, in terms of the number of edges.
In Section \ref{sec4} we prove Albertson's conjecture for $7 \leq r \leq 12$ by combining the results in the previous sections. 
In Section \ref{sec5} we show that 
any minimal counterexample to this conjecture has less than $4r$ vertices, and we also give a few concluding remarks.  

\section{Color critical graphs} 
\label{sec2}

About 1950, Dirac introduced the concept of color criticality in order to simplify graph coloring theory, and it has since led to many beautiful theorems. A graph $G$ is {\it $r$-critical} if $\chi(G)=r$ but all proper subgraphs of $G$ have chromatic number less than $r$.

Let $G$ denote an $r$-critical graph with $n$ vertices and $m$ edges.  
Define the excess $\epsilon_r(G)$ of $G$ to be 
$$\epsilon_r(G)=\sum_{x \in V(G)} \left(\deg(x)-(r-1)\right)=2m-(r-1)n.$$  Since $G$ is $r$-critical, every vertex has degree at least $r-1$ and so 
$\epsilon_r(G) \geq 0$. Brooks' theorem is equivalent to saying that equality holds if and only if $G$ is complete or an odd cycle. 
Dirac \cite{D} strengthened Brooks' theorem by proving that for $r \geq 3$, if $G$ is not complete, then $\epsilon_r(G) \geq r-3$. Later, Dirac \cite{D3} gave a complete characterization for $r \geq 4$ of those $r$-critical graphs with excess $r-3$, and, in particular, they all have $2r-1$ vertices. 
Gallai \cite{G2} proved that $r$-critical graphs that are not complete and that have at most $2r-2$ vertices have much larger excess. 
Namely, if $G$ has $n=r+p$ vertices and $2 \leq p \leq r-2$, then $\epsilon_r(G) \geq pr-p^2-2$.  A fundamental difference between Gallai's bound and Dirac's bound is that Gallai's bound grows with the number of vertices (while Dirac's does not).  
Several other papers \cite{G3,K,KS2,KS3} prove such Gallai-type bounds. 
Kostochka and Stiebitz \cite{KS} proved that if $n \geq r+2$ and $n \neq 
2r-1$, then $\epsilon_r(G) \geq 2r-6$. 
 
We will frequently use the bounds due to Dirac and to Kostochka and Stiebitz.  When we use these bounds, it will be convenient to rewrite them in terms of $m$, as below.

If $G$ is $r$-critical and not a complete graph and $r\geq 3$, then
\begin{equation*}\label{dirac}
m\geq\frac{r-1}2n+\frac{r-3}2.
\end{equation*}
We call this Dirac's bound.

If $G$ is $r$-critical, $n\geq r+2$, and $n\neq 2r-1$, then
\begin{equation*}
m\geq\frac{r-1}2n+r-3.
\end{equation*}
We call this the bound of Kostochka and Stiebitz.

We finish the section with a simple lemma classifying the $r$-critical graphs with at most $r+2$ vertices.

\begin{lemma}\label{lemma1}
For $r \geq 3$, the only $r$-critical graphs with at most $r+2$ vertices are $K_r$ and $K_{r+2} \setminus C_5$, the graph obtained from $K_{r+2}$ by deleting the edges of a cycle of length five.
\end{lemma}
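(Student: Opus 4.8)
Write $n=|V(G)|$ and let $\overline{G}$ be the complement of $G$. The plan is to split into the cases $n=r$, $n=r+1$, $n=r+2$, using throughout that an $r$-critical graph has minimum degree at least $r-1$, so $\Delta(\overline{G})\le n-r$. The basic translation I will exploit is that a proper coloring of $G=K_n\setminus\overline{G}$ is exactly a partition of $V(G)$ into cliques of $\overline{G}$; hence $\chi(G)$ equals the minimum number of cliques of $\overline{G}$ covering its vertices, and writing $\sigma(F)$ for $|V(F)|$ minus this clique-cover number, we have $\chi(G)=n-\sigma(\overline{G})$. A second, purely structural, observation does most of the elimination work: \emph{if $G$ is $r$-critical and $n>r$, then $\omega(G)\le r-1$}. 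Indeed $\omega(G)\ge r+1$ would give $\chi(G)\ge r+1$, while $\omega(G)=r$ would give a copy of $K_r$ inside $G$, a proper subgraph (as $r<n$) with chromatic number $r$, contradicting criticality. Since $\omega(G)=\alpha(\overline{G})$, this says $\alpha(\overline{G})\le n-3$ when $n\in\{r+1,r+2\}$.

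The cases $n=r$ and $n=r+1$ are quick. If $n=r$, then $\chi(G)=r$ on $r$ vertices forces $\overline{G}$ edgeless, i.e.\ $G=K_r$. If $n=r+1$, then $\Delta(\overline{G})\le 1$, so $\overline{G}$ is a matching together with isolated vertices and $\chi(G)=(r+1)-(\text{number of matching edges})$; thus $\overline{G}$ has exactly one edge and $G=K_{r+1}\setminus e$. But $K_{r+1}\setminus e$ contains $K_r$ (delete an endpoint of $e$), contradicting $\omega(G)\le r-1$; so no $r$-critical graph has $r+1$ vertices.

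The main case is $n=r+2$, where $\Delta(\overline{G})\le 2$, so $\overline{G}$ is a disjoint union of paths and cycles (isolated vertices counted as trivial paths). The requirement $\chi(G)=r$ becomes $\sigma(\overline{G})=2$. Since $\sigma$ is additive over components and $\sigma(F)\ge 1$ for every nontrivial path or cycle, $\overline{G}$ has at most two nontrivial components; a short computation using $\sigma(P_k)=\lfloor k/2\rfloor$ for $k\ge 2$, $\sigma(C_k)=\lfloor k/2\rfloor$ for $k\ge 4$, and $\sigma(C_3)=2$ leaves exactly these possibilities for the nontrivial part of $\overline{G}$: one of $C_3,C_4,C_5,P_4,P_5$, or two components chosen from $\{P_2,P_3\}$ (the remaining vertices being isolated). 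Now apply $\alpha(\overline{G})\le n-3=r-1$, i.e.\ $|V(\overline{G})|-\alpha(\overline{G})\ge 3$: using $\alpha(P_k)=\lceil k/2\rceil$ and $\alpha(C_k)=\lfloor k/2\rfloor$, every candidate on the list has $|V|-\alpha$ equal to $1$ or $2$ except $C_5$, for which it equals $3$. Hence $\overline{G}=C_5\cup(r-3)K_1$, that is, $G=K_{r+2}\setminus C_5$.

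Finally I must check that $G=K_{r+2}\setminus C_5$ is indeed $r$-critical. That $\chi(G)=r$ follows from $\sigma(C_5\cup(r-3)K_1)=2$. For criticality I verify that $\chi(G-e)\le r-1$ for every edge $e$ of $G$, equivalently that $\overline{G}+e$ admits a clique cover with $r-1$ cliques, handling three cases: $e$ joins two of the $r-3$ extra vertices (take the edge plus the usual $3$-clique cover of $C_5$); $e$ joins an extra vertex $v$ to a cycle vertex $x$ (take $\{v,x\}$, then $\overline{G}-x$ restricted to the cycle is a $P_4$, covered by two edges); or $e$ is a chord of the $5$-cycle, in which case the chord creates a triangle, and one uses the dihedral symmetry of $C_5$ to reduce to a single chord. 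In each case an explicit cover with $r-1$ cliques is immediate. The only part calling for care is the bookkeeping in the case $n=r+2$: enumerating the $\sigma=2$ candidates correctly and checking that the constraint $\alpha(\overline{G})\le r-1$ eliminates every one of them except $C_5$; the rest is routine.
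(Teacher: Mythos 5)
Your proof is correct, but it takes a genuinely different route from the paper's. The paper proceeds by induction on $r$: a dominating vertex of $G$ (an isolated vertex of $\overline{G}$) is stripped off to reduce to the $(r-1)$-critical case, and when no dominating vertex exists, every component of $\overline{G}$ is a nontrivial path or cycle; the paper then excludes short path components via a ``no two vertices share a neighborhood'' twins argument and derives the contradiction $\chi(G)=\sum_i\chi_i\le \sum_i 3|H_i|/5=3n/5<n-2=r$ from a matching bound in each component. You avoid induction entirely: isolated vertices of $\overline{G}$ are carried along harmlessly since they contribute nothing to either $\sigma$ or $|V|-\alpha$, and instead of a crude counting bound you enumerate exactly the complements with $\sigma(\overline{G})=2$ and then eliminate all of them except $C_5$ by the observation that criticality forces $\omega(G)\le r-1$, i.e.\ $\alpha(\overline{G})\le n-3$. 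That clique-number consequence of criticality is the key idea the paper does not use, and it does the elimination more cleanly than the paper's twins-plus-counting step (which, as written, is slightly delicate for $P_3$ components, where the two endpoints are adjacent in $G$ and the recoloring argument needs care). You also verify that $K_{r+2}\setminus C_5$ really is $r$-critical, which the paper leaves implicit and does not actually need for its applications. The trade-off is length: the paper's induction is shorter on the page, while your direct argument is self-contained, needs no base case, and makes the role of criticality (via $\omega(G)\le r-1$) completely explicit.
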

\begin{proof}
The proof is by induction on $r$. For the base case $r=3$, the $3$-critical graphs are precisely odd cycles, and those with at most five vertices are 
$K_3$ and $C_5=K_5 \setminus C_5$. 

Let $G$ be an $r$-critical graph with $r \geq 4$ and $n \leq r+2$ vertices, so all vertices of $G$ have degree at least $r-1 \geq n-3$. If $G$ has a vertex $v$ adjacent to all other vertices of $G$, then clearly $G \setminus v$ is $(r-1)$-critical with at most $r+1$ vertices, and by induction, we are done in this case. So we may suppose that every vertex in the complement of $G$ has degree at least one and at most two. 
Denote by $H_1,\ldots,H_d$ the connected components of the complement of $G$. Since every vertex in the complement of $G$ has degree 1 or 2, each $H_i$ is a path or a cycle. No two vertices $u, w$ of $G$ have the same neighborhood, otherwise we could $(r-1)$-color $G \setminus u$ and give $w$ the same color as $u$. This implies that every $H_i$ that is a path has at least three edges. Every pair of vertices from different components of the complement of G are adjacent in G, and hence, have different colors in a proper coloring of G. It follows that the chromatic number of $G$ is equal to $\sum_{i=1}^d \chi_i$, where $\chi_i$ denotes the chromatic number of the subgraph of $G$ induced by the vertex set of $H_i$. Since $H_i$ is a path or a cycle, it has a matching of size at least $\lfloor |H_i|/2 \rfloor$ and hence, if $|H_i| \geq 4$,
then $\chi_i \leq \lceil |H_i|/2 \rceil \leq 3|H_i|/5$; for the final inequality here and the final inequality below, we assume that $n > 5$. Noting that if $H_i$ is a triangle then $\chi_i=1$, we have
$$\chi(G) = \sum_{i=1}^d \chi_i \leq \sum_{i=1}^d 3|H_i|/5 =3n/5 < n-2 = r,$$
contradicting that $G$ is $r$-critical and completing the proof.
\end{proof}

\section{Lower bounds on crossing number} 
\label{sec3}

A simple consequence of Euler's polyhedral formula is that every planar graph with $n \geq 3$ vertices has at most $3n-6$ edges. Suppose $G$ is a 
graph with $n$ vertices and $m$ edges. By deleting one crossing edge at a time from a drawing of $G$ until no crossing edges exist, we see that 
\begin{equation}\label{cn1} \cn(G) \geq m-(3n-6).\end{equation} 
Pach, R. Radoi\v{c}i\'c, G. Tardos, and G. T\'oth \cite{PRTT} proved the following lower bounds on the crossing number. 
\begin{eqnarray}\label{cn2} \cn(G) & \geq & \frac{7}{3}m-\frac{25}{3}(n-2), \\ 
\label{cn3} \cn(G) & \geq & 3m-\frac{35}{3}(n-2), \\ 
\label{cn4} \cn(G) & \geq & 4m-\frac{103}{6}(n-2).\end{eqnarray} 

Although inequality (\ref{cn3}) is not written explicitly in \cite{PRTT}, 
it follows from their proof of (\ref{cn4}). 
Of the above four inequalities on the crossing number, inequality (\ref{cn1}) is best when $m \leq 4(n-2)$, inequality (\ref{cn2}) is best when $4(n-2) \leq m \leq 5(n-2)$, inequality (\ref{cn3}) is best 
for $5(n-2) \leq m \leq 5.5(n-2)$, and inequality (\ref{cn4}) is best when $m \geq 5.5(n-2)$. 

A celebrated result of Ajtai, Chv\'atal, Newborn, and Szemer\'edi \cite{ACNS} 
and Leighton~\cite{L}, known as {\em the Crossing Lemma}, states
that the crossing number of every graph $G$ with $n$ vertices and $m
\geq 4n$ edges satisfies
\begin{equation*} 
\cn(G) \geq \frac{1}{64}\frac{m^3}{n^2}.
\end{equation*}
The constant factor $\frac{1}{64}$ comes from the well-known probabilistic proof~\cite{AZ} using inequality (\ref{cn1}).  
The best known constant factor is due to Pach \etal~\cite{PRTT}. Using (\ref{cn4}), they show for $m \geq \frac{103}{16}n$ that
\begin{equation}\label{crosslemma2}\cn(G) \geq \frac{1}{31.1}\frac{m^3}{n^2}.\end{equation}

\section{Albertson's conjecture for $r \leq 12$} 
\label{sec4}

In this section we prove Albertson's conjecture (Conjecture~\ref{albertson}) 
for $r=7,8,9,10,11,12$.
Note that if $H$ is a subgraph of $G$, then $\cn(H) \leq \cn(G)$. Therefore, to prove Albertson's conjecture for a given $r$, it suffices to prove it 
only for $r$-critical graphs. 

Lemma \ref{lemma1} demonstrates that the only $r$-critical graphs with $n \leq r+2$ vertices are $K_r$ and $K_{r+2} \setminus C_5$. This second graph contains a subdivision of $K_r$. Indeed, by using all the vertices of $K_{r+2}\setminus C_5$ and picking two adjacent vertices of degree $r-1$ to be internal vertices of a subdivided edge, we get a subdivision of $K_r$ with only one subdivided edge. Hence, $\cn(K_{r+2} \setminus C_5) \geq \cn(K_r)$. So a counterexample to Albertson's conjecture must have at least $r+3$ vertices.  However, none of our proofs rely on this observation except for the proof of Proposition~\ref{prop12case}; the others use only the easier observation that no $r$-critical graph has $r+1$ vertices.

\begin{proposition}\label{prop7case}
If $\chi(G)=7$, then $\cn(G) \geq 9 = \cn(K_7)$.  
\end{proposition}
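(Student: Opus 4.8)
The plan is to reduce to the case that $G$ is $7$-critical (this is legitimate since $\cn$ is monotone under taking subgraphs), deal with $G=K_7$ directly, and then split on the number of vertices. If $G=K_7$, the bound is immediate: $\cn(K_7)=f(7)=9$ (here $f$ is the function of \eqref{complete}, and Guy verified his conjecture for $n\le 10$). So I would assume $G\ne K_7$; then $G$ is not a complete graph, and since no $r$-critical graph has exactly $r+1$ vertices, $G$ has $n\ge r+2=9$ vertices.

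For the main argument I would consider two ranges of $n$. If $n\ne 13$, then $n\ge r+2$ and $n\ne 2r-1$, so the bound of Kostochka and Stiebitz applies and gives $m\ge\frac{r-1}{2}n+r-3=3n+4$. Feeding this into inequality~(\ref{cn1}) gives $\cn(G)\ge m-(3n-6)\ge 10>9$, and we are done; no separate treatment of large or dense $G$ is needed, because every bound in Section~\ref{sec3} is nondecreasing in $m$. The remaining, and only delicate, range is $n=13=2r-1$. Here Kostochka--Stiebitz is unavailable, and Dirac's bound gives only $m\ge 3n+2=41$, so inequality~(\ref{cn1}) yields merely $\cn(G)\ge 8$. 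To recover the missing crossing, I would note that $2m-6\cdot 13=\epsilon_7(G)$ is even and, by Dirac's bound, at least $r-3=4$. If $\epsilon_7(G)>r-3$ then in fact $\epsilon_7(G)\ge 6$, so $m\ge 42$ and inequality~(\ref{cn1}) already gives $\cn(G)\ge 9$. Otherwise $\epsilon_7(G)=4=r-3$, and here I would invoke Dirac's classification~\cite{D3} of the $r$-critical graphs of excess $r-3$: these are built from two complete graphs by a \hajos-type identification (the prototype being the \hajos\ join of two copies of $K_7$), and each of them contains a subdivision of $K_7$ --- on the \hajos\ join of two $K_7$'s, for instance, take the seven vertices on one side and route its single missing edge as a length-two path through a vertex on the other side together with the added edge. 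By Schaefer's observation \cite{S} that a subdivision of $K_r$ has at least $\cn(K_r)$ crossings, this gives $\cn(G)\ge\cn(K_7)=9$.

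The hard part is the last step: checking that every $7$-critical graph of excess exactly $4$ contains a subdivision of $K_7$. Everything else is a one-line consequence of the edge bounds of Section~\ref{sec2} and the crossing-number bounds of Section~\ref{sec3}, but this step genuinely needs structural information about extremal critical graphs on $2r-1$ vertices, which is why I would lean on Dirac's characterization in \cite{D3} rather than attempt the subdivision by hand. I expect the analogous $n=2r-1$, minimum-excess case to be the one subtlety for $r=8,\dots,12$ as well, although there the extremal graphs are dense enough that the Section~\ref{sec3} bounds may already suffice on their own.
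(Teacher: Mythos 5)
Your route is genuinely different from the paper's, and most of it is sound: the reduction to a $7$-critical $G\neq K_7$ with $n\geq 9$, the Kostochka--Stiebitz computation $m\geq 3n+4$ giving $\cn(G)\geq m-(3n-6)\geq 10$ for $n\neq 13$, and the parity observation that $\epsilon_7(G)=2m-78$ is even, which correctly isolates the single surviving case $n=13$, $m=41$, excess exactly $r-3=4$. The gap is in how you close that case. You appeal to Dirac's characterization of the excess-$(r-3)$ critical graphs and assert that every graph in the family contains a subdivision of $K_7$, but you neither state the characterization nor verify the subdivision claim across the family; you check only the prototype, and even there the routing fails. In the \hajos\ join of two copies of $K_7$ (delete $x_1y_1$ and $x_2y_2$, identify $x_1$ with $x_2$ into a vertex $x$, add $y_1y_2$), the vertex $x$ is \emph{not} adjacent to $y_2$, and the only neighbor of $y_1$ outside its own side is $y_2$; so there is no length-two path from $x$ to $y_1$ through the other side. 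A length-three path $x$--$b$--$y_2$--$y_1$ (with $b$ any remaining vertex of the second copy) does exist, and the same three-edge detour works for the other members of Dirac's family (each is an Ore-type composition of two copies of $K_7$: one side induces $K_7$ minus an edge $xy$, the other contains a clique meeting both $N(x)$ and $N(y)$). So the statement you need is true, but as written it is an unproved assertion resting on a characterization you have not quoted, and it carries the entire weight of the hardest case.

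For comparison, the paper avoids all of this with one observation: by Borodin's theorem a graph drawable with every edge crossed at most once is $6$-colorable, so in an optimal drawing of a $7$-chromatic graph some edge is crossed at least twice; deleting that edge first improves the Euler-type count to $\cn(G)\geq m-(3n-6)+1$, and Dirac's bound $m\geq 3n+2$ then gives $\cn(G)\geq 9$ with no case split on $n$ at all. Your approach yields a slightly stronger conclusion away from the extremal case ($\cn(G)\geq 10$ when $n\neq 13$) but at the cost of the structural analysis above; the paper's argument is uniform and essentially one line.
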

\begin{proof}
By the remarks above, we may suppose $G$ is $7$-critical and not $K_7$. Let $n$ be the number of vertices of $G$ and $m$ be the number of edges of $G$.
By Dirac's bound, we have $m \geq 3n+2$. Borodin \cite{Bo} showed that if a graph has a drawing in the plane in which each edge intersects at most one other edge, then the graph has chromatic number at most $6$. Consider a drawing $D$ of $G$ in the plane with $\cn(G)$ crossings. Since $G$ has chromatic number $7$, there is an edge $e$ in $D$ that intersects at least two other edges.
Beginning with $e$, we delete one crossing edge at a time, until no crossing edges exist. We get that $\cn(G)\geq m - (3n-6) + 1= m-3n+7$.  Since $m\geq 3n+2$, this bound gives:
$$\cn(G) \geq m-3n+7 \geq 9.$$
This completes the proof.  
\end{proof}

\begin{proposition}\label{prop8case}
If $\chi(G)=8$ and $G$ does not contain $K_8$, then $\cn(G) \geq 20 > 18 = \cn(K_8)$.
\end{proposition}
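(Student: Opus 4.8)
The plan is to follow the template of Proposition~\ref{prop7case}: pass to an $8$-critical subgraph, bound its number of edges from below using the results of Section~\ref{sec2}, and substitute into the crossing-number inequalities of Section~\ref{sec3}, choosing the inequality according to how sparse the graph is.

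First I would reduce to the case that $G$ is $8$-critical; this loses nothing, since no subgraph has larger crossing number and an $8$-critical subgraph of $G$ still contains no $K_8$, so in particular it is not $K_8$. Write $n=|V(G)|$ and $m=|E(G)|$. Every vertex of an $8$-critical graph has degree at least $7$, so the only $8$-critical graph on $8$ vertices is $K_8$, and no $r$-critical graph has exactly $r+1$ vertices; hence $n\geq 10$. For every non-complete $8$-critical graph Dirac's bound gives $m\geq\frac72 n+\frac52$, and for $n\neq 15$ the bound of Kostochka and Stiebitz gives the stronger $m\geq\frac72 n+5$.

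Then I would split on $n$. For the dense range $10\leq n\leq 15$, substituting Dirac's bound into inequality~\eqref{cn2} gives
$$\cn(G)\geq\frac73 m-\frac{25}{3}(n-2)\geq\frac73\left(\frac72 n+\frac52\right)-\frac{25}{3}(n-2)=\frac{135-n}{6}\geq\frac{135-15}{6}=20.$$
For the sparse range $n\geq 16$ (so $n\neq 15$, and the Kostochka--Stiebitz bound applies), I would invoke Borodin's theorem exactly as in Proposition~\ref{prop7case}: since $\chi(G)=8>6$, the graph $G$ has no drawing in which every edge is crossed at most once, so any drawing realizing $\cn(G)$ contains an edge crossed at least twice; deleting crossed edges one at a time, beginning with such an edge, yields
$$\cn(G)\geq m-(3n-6)+1\geq\frac72 n+5-3n+7=\frac n2+12\geq 20.$$
In both cases $\cn(G)\geq 20>18=\cn(K_8)$, as desired.

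The only delicate point, and the only place with no slack, is the value $n=2r-1=15$: there the Kostochka--Stiebitz bound is unavailable, and Dirac's weaker bound gives precisely $\cn(G)\geq 20$; the sparse estimate is likewise exactly tight at $n=16$. So the substantive work is just verifying the arithmetic at these two endpoints, and it is these endpoints that pin the constant in the statement at $20$. (Borodin's theorem can be avoided: inequality~\eqref{cn2} together with Dirac's bound already gives $\cn(G)\geq\lceil(135-n)/6\rceil\geq 20$ for all $n\leq 20$, while inequality~\eqref{cn1} together with the Kostochka--Stiebitz bound gives $\cn(G)\geq\frac n2+11\geq 20$ for all $n\geq 18$.)
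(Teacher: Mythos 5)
Your proof is correct and follows essentially the same route as the paper's: reduce to an $8$-critical graph, apply Dirac's bound where Kostochka--Stiebitz fails at $n=2r-1=15$ and Kostochka--Stiebitz elsewhere, and feed these into the crossing-number inequalities, with the arithmetic checking out at every step. The only (minor) divergence is that for $16\le n\le 17$ you import the Borodin-based ``$+1$'' refinement of inequality~\eqref{cn1} from the proof of Proposition~\ref{prop7case}, whereas the paper instead applies inequality~\eqref{cn2} with the Kostochka--Stiebitz bound (valid for all $n\le 50$); your parenthetical alternative recovers exactly that argument, so nothing of substance is different.
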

\begin{proof}
We may suppose $G$ is $8$-critical. Let $n$ be the number of vertices of $G$ and $m$ be the number of edges of $G$.
When $n = 15$, Dirac's bound 
gives $m \geq \frac{7}{2}n+2.5 = 55$, and thus inequality (\ref{cn2}) gives 
$$\cn(G) \geq \frac{7}{3}m-\frac{25}{3}(n-2)\ge 20.$$
When $n \not = 15$, the bound of Kostochka and Stiebitz 
gives $m \geq \frac{7}{2}n+5$.
When we substitute for $m$, inequalities (\ref{cn2}) and (\ref{cn3}) give
$$\cn(G) \geq m-3n+6 \geq \frac{n}{2}+11,$$
and
$$\cn(G) \geq \frac{7}{3}m-\frac{25}{3}(n-2) \geq \frac{7}{3}(\frac{7}{2}n+5)-\frac{25}{3}(n-2)=-\frac{n}{6}+\frac{85}{3}.$$
The first lower bound shows that $\cn(G) \geq 20$ if $n \geq 18$, while the second lower bound shows that $\cn(G) \geq 20$ if $n \leq 50$.  
This completes the proof.
\end{proof}

\begin{proposition}\label{prop9case}
If $\chi(G)=9$ and $G$ does not contain $K_9$, then $\cn(G) \geq 41 > 36 = \cn(K_9)$.
\end{proposition}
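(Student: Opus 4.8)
The plan is to follow the template of Propositions~\ref{prop7case} and~\ref{prop8case}: reduce to the case that $G$ is $9$-critical, set $n=|V(G)|$ and $m=|E(G)|$, derive a lower bound on $m$ from the results of Section~\ref{sec2}, and feed it into one of the crossing-number inequalities of Section~\ref{sec3}. First I would observe that $G\neq K_9$ (since $G$ does not contain $K_9$) and that no $r$-critical graph has exactly $r+1$ vertices, so $n\ge r+2=11$; by Lemma~\ref{lemma1} the case $n=11$ is exactly $K_{11}\setminus C_5$, though I do not expect to need this refinement.

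Next I would split on whether $n=2r-1=17$. For $n\ge 11$ with $n\ne 17$, the bound of Kostochka and Stiebitz applies and gives $m\ge 4n+6$. Substituting into inequality~\eqref{cn2},
\[
\cn(G)\ \ge\ \frac{7}{3}m-\frac{25}{3}(n-2)\ \ge\ \frac{7}{3}(4n+6)-\frac{25}{3}(n-2)\ =\ n+\frac{92}{3}\ >\ 41
\]
for every $n\ge 11$; in fact this already yields $\cn(G)\ge 42$. The only remaining case is $n=17$, where the Kostochka--Stiebitz bound does not apply. There I would instead use Dirac's bound for the (necessarily non-complete) $9$-critical graph $G$, namely $m\ge 4n+3=71$, and apply~\eqref{cn2} again:
\[
\cn(G)\ \ge\ \frac{7}{3}\cdot 71-\frac{25}{3}\cdot 15\ =\ \frac{122}{3}\ >\ 40 ,
\]
so $\cn(G)\ge 41$ since $\cn(G)$ is an integer. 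Since $\cn(K_9)=36$, this proves the proposition.

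I do not anticipate a serious obstacle here; the whole argument is a short case check, and the only delicate point is the case $n=17$, where one must (a) remember that the Kostochka--Stiebitz bound specifically excludes $n=2r-1$ and fall back on the weaker Dirac bound, and (b) note that the resulting estimate $\cn(G)\ge 122/3\approx 40.7$ clears the target only after rounding up to the next integer. For all $n\ne 17$ the bound is comfortably above $41$, and one could even get away with the cruder inequality~\eqref{cn1} once $n$ is large; but~\eqref{cn2} handles all $n\ge 11$ uniformly, so I would use it throughout. Unlike Proposition~\ref{prop7case}, no appeal to Borodin's theorem on drawings in which each edge is crossed at most once is needed.
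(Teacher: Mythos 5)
Your proposal is correct and matches the paper's proof essentially verbatim: both split on $n=17$ versus $n\neq 17$, apply Dirac's bound with inequality~\eqref{cn2} in the first case (concluding $\cn(G)\ge 41$ by integrality from $122/3$) and the Kostochka--Stiebitz bound with~\eqref{cn2} in the second. All the arithmetic checks out, so there is nothing to add.
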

\begin{proof}
We may suppose $G$ is $9$-critical. Let $n \geq 11$ be the number of vertices of $G$ and $m$ be the number of edges of $G$.
When $n = 17$, Dirac's bound 
gives $m \geq 4n+3 = 71$, so inequality (\ref{cn2}) gives
$$\cn(G) \geq \frac{7}{3}m-\frac{25}{3}(n-2)\ge \frac{122}{3} > 40.$$
Thus $\cn(G) \geq 41$. 
When $n \not = 17$, the bound of Kostochka and Stiebitz 
gives $m \geq 4n+6$.
Hence, inequality (\ref{cn2}) gives 
$$\cn(G) \geq \frac{7}{3}m-\frac{25}{3}(n-2) \geq n +\frac{92}{3} \geq 11+\frac{92}{3} > 41.$$ 
This completes the proof. 
\end{proof}

\begin{proposition}\label{prop10case}
If $\chi(G)=10$ and $G$ does not contain $K_{10}$, then $\cn(G) \geq 69 > 60 = \cn(K_{10})$.
\end{proposition}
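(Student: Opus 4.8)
The plan is to reduce the statement to the case that $G$ is $10$-critical: since $\cn$ does not increase under passing to a subgraph, it suffices to bound $\cn(G)$ when $G$ is $10$-critical, and then the hypothesis that $G$ does not contain $K_{10}$ simply says $G\neq K_{10}$. Write $n=|V(G)|$ and $m=|E(G)|$. Since $G$ is $10$-critical but not $K_{10}$, and since no $r$-critical graph has exactly $r+1$ vertices, we have $n\ge r+2=12$. I would then split on whether $n$ equals $2r-1=19$, the single value of $n$ excluded from the Kostochka--Stiebitz bound.

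If $n\neq 19$, the bound of Kostochka and Stiebitz gives $m\ge \frac{9}{2}n+7$. Inequality (\ref{cn4}) holds for every graph (whether or not it is the sharpest of the four stated bounds for the given $n,m$), so substituting yields
$$\cn(G)\ \ge\ 4m-\frac{103}{6}(n-2)\ \ge\ 4\left(\frac{9}{2}n+7\right)-\frac{103}{6}(n-2)\ =\ \frac{5n+374}{6}.$$
This is increasing in $n$, so for $n\ge 12$ it is at least $\frac{434}{6}>72$, giving $\cn(G)\ge 69$.

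If $n=19$, only Dirac's bound is available, giving $m\ge \frac{9}{2}\cdot 19+\frac{7}{2}=89$. Now (\ref{cn4}) is too weak, so I would instead use inequality (\ref{cn3}):
$$\cn(G)\ \ge\ 3m-\frac{35}{3}(n-2)\ \ge\ 3\cdot 89-\frac{35}{3}\cdot 17\ =\ \frac{206}{3}\ >\ 68.$$
Since $\cn(G)$ is a nonnegative integer, $\cn(G)\ge 69>60=\cn(K_{10})$, as required.

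The one delicate point is exactly the value $n=19=2r-1$. There Kostochka--Stiebitz is unavailable and we are left with Dirac's bound $m\ge 89$, which is genuinely attained (by the $10$-critical graphs of excess $r-3$ that Dirac characterized, all of which have $2r-1$ vertices), so the available slack is minimal; the target can be reached only by using (\ref{cn3}) rather than (\ref{cn4}) and then rounding $\frac{206}{3}$ up to $69$, which is precisely why the constant in the statement is $69$ and not larger. One could in principle avoid the integrality step by splitting further according to whether $G$ attains Dirac's bound and invoking his characterization of the extremal graphs, but this refinement is not needed here.
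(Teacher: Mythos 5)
Your proof is correct and follows essentially the same route as the paper: the same reduction to $10$-critical graphs with $n\ge 12$, the same case split on $n=19$ versus $n\neq 19$, Dirac's bound with inequality (\ref{cn3}) in the first case and the Kostochka--Stiebitz bound with inequality (\ref{cn4}) in the second, with identical arithmetic. The only difference is cosmetic: you make explicit the integrality rounding from $\tfrac{206}{3}>68$ to $\cn(G)\ge 69$ and remark on why $n=19$ is the tight case, which the paper leaves implicit.
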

\begin{proof}
We may suppose $G$ is $10$-critical. Let $n \geq 12$ be the number of vertices of $G$ and $m$ be the number of edges of $G$.
When $n = 19$, Dirac's bound 
gives $m \geq \frac{9}{2}n+\frac{7}{2} = 89$, so inequality (\ref{cn3}) gives
$$\cn(G) \geq 3m-\frac{35}{3}(n-2)\ge\frac{206}{3} > 68.$$
Thus $\cn(G) \geq 69$.
When $n \not = 19$, the bound of Kostochka and Stiebitz 
gives $m \geq \frac{9}{2}n+7$, so
inequality (\ref{cn4}) gives 
$$\cn(G) \geq 4m-\frac{103}{6}(n-2) \geq \frac{5}{6}n +\frac{187}{3} \geq 10+\frac{187}{3} > 72.$$
This completes the proof.
\end{proof}

\begin{proposition}\label{prop11case}
If $\chi(G)=11$ and $G$ does not contain $K_{11}$, then $\cn(G) \geq 104 > 100 = \cn(K_{11})$.
\end{proposition}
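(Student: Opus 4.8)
The plan is to mirror the structure of the preceding propositions, reducing to the $11$-critical case and then splitting on whether the number of vertices $n$ equals the exceptional value $2r-1 = 21$. Since $\cn(K_{11}) = 100$ (by Pan and Richter), I need to show that any $11$-critical graph $G$ other than $K_{11}$ has $\cn(G) \geq 104$. As in Proposition~\ref{prop10case}, I may assume $n \geq r+2 = 13$ (no $r$-critical graph has $r+1$ vertices, and $K_{r+2}\setminus C_5$ contains a $K_r$-subdivision so it is not a counterexample, though here we will not even need the sharper $n \geq r+3$ bound).

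\textbf{Case $n = 21$.} Here only Dirac's bound applies, giving $m \geq \frac{r-1}{2}n + \frac{r-3}{2} = 5 \cdot 21 + 4 = 109$. I would then feed this into whichever of the Pach et al.\ inequalities (\ref{cn2})--(\ref{cn4}) is strongest at the ratio $m/(n-2) = 109/19 \approx 5.7$, which is inequality (\ref{cn4}): $\cn(G) \geq 4m - \frac{103}{6}(n-2) \geq 4\cdot 109 - \frac{103}{6}\cdot 19 = 436 - \frac{1957}{6} = \frac{659}{6} > 109$, so $\cn(G) \geq 110 > 104$. (I should double-check that $109/19 \geq 5.5$ so that (\ref{cn4}) is indeed the right choice; it is.)

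\textbf{Case $n \neq 21$.} Here the Kostochka--Stiebitz bound applies: $m \geq \frac{r-1}{2}n + (r-3) = 5n + 8$. As in the $r=10$ proof I expect to need two of the crossing-number inequalities, one that increases in $n$ and one that decreases in $n$, so that together they cover all $n \geq 13$. Substituting $m \geq 5n+8$ into inequality (\ref{cn4}) gives $\cn(G) \geq 4(5n+8) - \frac{103}{6}(n-2) = 20n + 32 - \frac{103}{6}n + \frac{103}{3} = \frac{17}{6}n + \frac{199}{3}$, which is increasing in $n$ and already exceeds $104$ at $n = 13$ (it gives $\frac{17\cdot 13}{6} + \frac{199}{3} = \frac{221 + 398}{6} = \frac{619}{6} > 103$, so $\geq 104$); if the margin at $n=13$ is too thin I would instead use (\ref{cn1}), $\cn(G) \geq m - 3n + 6 \geq 2n + 14 \geq 40$ — no, that is too weak, so I would keep (\ref{cn4}) and just verify the arithmetic carefully, or split off the smallest values of $n$ by hand. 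Since inequality (\ref{cn4}) is increasing in $n$, a single application settles \emph{all} $n \geq 13$ with $n \neq 21$ at once, so unlike the $r=8,9,10$ cases I likely do not even need a second, decreasing bound.

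\textbf{The main obstacle} is arithmetic bookkeeping: checking at $n = 13$ (the smallest value, where the increasing bound is tightest) that inequality (\ref{cn4}) applied to $m \geq 5n+8$ clears $104$, and confirming that for $n = 21$ Dirac's bound with (\ref{cn4}) clears it as well. There is no conceptual difficulty here — unlike Proposition~\ref{prop7case} we do not need Borodin's or Albertson's structural results, because the edge counts from Dirac and Kostochka--Stiebitz for $r = 11$ are large enough relative to $\cn(K_{11}) = 100$ that the Pach et al.\ bounds alone suffice. The only place to be careful is ensuring the correct inequality among (\ref{cn1})--(\ref{cn4}) is selected for the relevant range of $m/(n-2)$, and that the strict inequality $\cn(G) > 100$ is actually upgraded to $\cn(G) \geq 104$ as claimed.
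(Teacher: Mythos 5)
Your proposal is correct and follows essentially the same route as the paper: reduce to the $11$-critical case with $n \geq 13$, apply Dirac's bound with inequality (\ref{cn4}) when $n = 21$, and the Kostochka--Stiebitz bound with inequality (\ref{cn4}) when $n \neq 21$, obtaining the identical estimates $\frac{659}{6}$ and $\frac{17}{6}n + \frac{199}{3} \geq \frac{619}{6} > 103$. Your observation that a single increasing bound suffices here (unlike the $r=8$ case) is also exactly what the paper does.
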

\begin{proof}
We may suppose $G$ is $11$-critical. Let $n \geq 13$ be the number of vertices of $G$ and $m$ be the number of edges of $G$.
When $n = 21$, Dirac's bound 
gives $m \geq 5n+4 = 109$, so inequality (\ref{cn4}) gives
$$\cn(G) \geq 4m-\frac{103}{6}(n-2)\ge\frac{659}{6} > 109.$$
Thus $\cn(G) \geq 110$.
When $n \not = 21$, the bound of Kostochka and Stiebitz 
gives $m \geq 5n+8$, so
inequality (\ref{cn4}) gives
$$\cn(G) \geq 4m-\frac{103}{6}(n-2) \geq \frac{17}{6}n +\frac{199}{3} \geq \frac{17}{6} \cdot 13+\frac{199}{3} > 103.$$ 
Thus $\cn(G) \geq 104$, 
which completes the proof.
\end{proof}

\begin{proposition}\label{prop12case}
If $\chi(G)=12$, then $\cn(G) \geq 153 > 150 = \cn(K_{12})$.
\end{proposition}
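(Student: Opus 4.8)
The plan is to follow the pattern of Propositions~\ref{prop7case}--\ref{prop11case}: reduce to the case that $G$ is $12$-critical, bound the number of edges $m$ from below in terms of the number of vertices $n$, and substitute the result into one of the inequalities~(\ref{cn1})--(\ref{cn4}). Since a $12$-critical graph that is not complete satisfies $m\ge\frac{11}{2}n>\frac{11}{2}(n-2)=5.5(n-2)$, inequality~(\ref{cn4}) will be the relevant one throughout. The one new feature, compared with the earlier propositions, is that for the smallest admissible values of $n$ neither Dirac's bound nor the Kostochka--Stiebitz bound is strong enough, so the plan is to bring in Gallai's bound there instead; this is precisely the point at which it matters that a counterexample to Albertson's conjecture must have at least $r+3$ vertices.

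First I would handle the graphs with few vertices. By Lemma~\ref{lemma1}, a $12$-critical graph with $n\le 14$ is $K_{12}$ or $K_{14}\setminus C_5$; the former has $\cn(K_{12})=150$, and the latter contains a subdivision of $K_{12}$ and hence has crossing number at least $\cn(K_{12})=150$. So Albertson's conjecture already holds when $n\le 14$, and I may assume $n\ge 15$, on which range I will obtain the stronger bound $\cn(G)\ge 153>150$.

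For $15\le n\le 22$ write $n=12+p$ with $3\le p\le 10$; Gallai's bound gives $\epsilon_{12}(G)=2m-11n\ge 12p-p^2-2$, i.e.\ $m\ge\frac{11n+12p-p^2-2}{2}$. Feeding this into~(\ref{cn4}) and checking $p=3,\ldots,10$ is routine; the bound is weakest at $n=15$, where Gallai gives $m\ge 95$ and then $\cn(G)\ge 4\cdot 95-\frac{103}{6}\cdot 13=\frac{941}{6}>156$, so $\cn(G)\ge 157$. (At the other extreme $n=22$, Gallai gives only $m\ge 130$, but~(\ref{cn4}) still yields $\cn(G)\ge 176$.) For $n=23=2\cdot 12-1$ neither Gallai nor Kostochka--Stiebitz applies, so I would use Dirac's bound $m\ge\frac{11}{2}\cdot 23+\frac{9}{2}=131$, which via~(\ref{cn4}) gives $\cn(G)\ge 4\cdot 131-\frac{103}{6}\cdot 21=163.5$, hence $\cn(G)\ge 164$. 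Finally, for $n\ge 24$ the Kostochka--Stiebitz bound $m\ge\frac{11}{2}n+9$ together with~(\ref{cn4}) gives $\cn(G)\ge 4\left(\frac{11}{2}n+9\right)-\frac{103}{6}(n-2)=\frac{29}{6}n+\frac{211}{3}$, which is increasing in $n$ and at least $\frac{1118}{6}>186$ at $n=24$, so $\cn(G)\ge 187$. In every case $\cn(G)\ge 153>150=\cn(K_{12})$.

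The main obstacle is the band $15\le n\le 16$ (and, marginally, $n=17$): there Dirac's bound ($m\ge 87$ at $n=15$) and even the Kostochka--Stiebitz bound ($m\ge 92$) are too weak, yielding through~(\ref{cn4}) only $\cn(G)\ge 125$ and $\cn(G)\ge 145$ respectively---short of $153$. It is the smallness of $n$ that lets Gallai's quadratic-in-$p$ excess bound rescue these cases. A minor point to verify is that~(\ref{cn4}) really is the best of the four inequalities throughout (which holds because $m>5.5(n-2)$ always), and that the Crossing Lemma form~(\ref{crosslemma2}) does no better, its hypothesis $m\ge\frac{103}{16}n$ just failing in the delicate small-$n$ cases.
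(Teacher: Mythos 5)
Your proposal is correct and follows essentially the same route as the paper: reduce to a $12$-critical graph with $n\ge 15$ via Lemma~\ref{lemma1} and the subdivision observation, then feed Gallai's bound (small $n$), Dirac's bound ($n=23$), and the Kostochka--Stiebitz bound (the remaining $n$) into inequality~(\ref{cn4}). The only difference is cosmetic --- you invoke Gallai for all of $15\le n\le 22$ where the paper switches to Kostochka--Stiebitz already at $n=17$ --- and your numerical checks (e.g.\ $m\ge 95$ at $n=15$, $m\ge 131$ at $n=23$) agree with the paper's.
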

\begin{proof}
We may suppose $G$ is $12$-critical and is not $K_{12}$. Let $n$ be the number of vertices of $G$ and $m$ be the number of edges of $G$. By the remark before the proof of Proposition \ref{prop7case}, we may suppose $G$ has at least 15 vertices.

{\bf Case 1:} $n = 23$. Dirac's bound gives $m \geq \frac{11}{2}n+\frac{9}{2} = 131$, so inequality (\ref{cn4}) gives
$$\cn(G) \geq 4m-\frac{103}{6}(n-2)\ge\frac{327}{2} > 163.$$
Thus $\cn(G) \geq 164$.

{\bf Case 2:} $n>16$ and $n \not = 23$. The bound of Kostochka and Stiebitz 
gives $m \geq \frac{11}{2}n+9$, so inequality (\ref{cn4}) gives
$$\cn(G) \geq 4m-\frac{103}{6}(n-2) \geq \frac{29}{6}n +\frac{211}{3} > 152.$$
Thus we get $\cn(G) \geq 153$ if $n > 16$.
 
{\bf Case 3:} $n=15$. By rewriting Gallai's bound (from Section~\ref{sec2}) as a lower bound on $m$, and substituting $r=12$,
we get the inequality $m \geq \frac{11}{2}n+\frac{3}{2}r-\frac{11}{2} = \frac{11}{2}n+\frac{25}{2}=95$.
Now inequality (\ref{cn4}) gives 
$$\cn(G) \geq 4m-\frac{103}{6}(n-2) \ge 4 \cdot 95 - \frac{103}{6} \cdot 13 > 156.$$

{\bf Case 4:} $n=16$. We again use Gallai's bound 
with $r=12$, 
and now we get the inequality $m \geq \frac{11}{2}n+2r-9 = 103$.
Now inequality (\ref{cn4}) gives 
$$\cn(G) \geq 4m-\frac{103}{6}(n-2) \ge 4\cdot 103 - \frac{103}6\cdot 14 > 171.$$
This completes the proof. 
\end{proof}

\section{Concluding remarks} 
\label{sec5}

In the previous section, we showed that a minimal counterexample to Albertson's conjecture has at least $r+3$ vertices. Here we give an upper bound 
on the number of vertices of a counterexample. 

\begin{proposition}
If $G$ is an $r$-critical graph with $n \geq 4r$ vertices, 
then $\cn(G) \geq \cn(K_r)$.
\end{proposition}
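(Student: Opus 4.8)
The plan is to combine the Kostochka--Stiebitz edge bound for $r$-critical graphs with the strongest crossing-number bound available (the Pach \etal~form of the Crossing Lemma, inequality \eqref{crosslemma2}), and compare the resulting quantity with the trivial upper bound $f(r)$ on $\cn(K_r)$ coming from Guy's drawing. Since $n \geq 4r \geq r+2$ and we may assume $n \neq 2r-1$ (if $n = 2r-1$ then $n = 2r-1 < 4r$ for $r \geq 1$, so this case does not even arise), the Kostochka--Stiebitz bound applies and gives $m \geq \frac{r-1}{2}n + r - 3$. In particular $m \geq \frac{r-1}{2}\cdot 4r = 2r(r-1)$, which comfortably exceeds $\frac{103}{16}n$ once $r$ is a modest constant, so inequality \eqref{crosslemma2} is in force.

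First I would substitute the edge lower bound into \eqref{crosslemma2} to obtain $\cn(G) \geq \frac{1}{31.1}\frac{m^3}{n^2} \geq \frac{1}{31.1}\cdot\frac{((r-1)n/2)^3}{n^2} = \frac{(r-1)^3}{31.1 \cdot 8}\, n$. Using $n \geq 4r$ this yields a bound of order $\frac{(r-1)^3 r}{62.2}$, i.e. roughly $\frac{r^4}{62}$ for large $r$. On the other side, $\cn(K_r) \leq f(r) = \frac{1}{4}\lfloor r/2\rfloor\lfloor (r-1)/2\rfloor\lfloor (r-2)/2\rfloor\lfloor (r-3)/2\rfloor < \frac{r^4}{64}$. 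So asymptotically the lower bound $\frac{r^4}{62}$ beats the upper bound $\frac{r^4}{64}$, which is exactly the kind of margin one wants; the main work is to check that the inequality $\frac{(r-1)^3}{31.1\cdot 8}\cdot 4r \geq f(r)$ actually holds for all $r \geq 5$ (or for all $r$ at least as large as the smallest case not already handled in Section~\ref{sec4}), not just asymptotically.

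The key steps, in order, are: (1) verify $n = 2r-1$ is incompatible with $n \geq 4r$, so Kostochka--Stiebitz applies cleanly; (2) verify $m \geq \frac{103}{16}n$ so that \eqref{crosslemma2} is valid --- this follows from $m \geq \frac{r-1}{2}n$ and $\frac{r-1}{2} \geq \frac{103}{16}$, i.e. $r \geq 13$, with the finitely many cases $r \leq 12$ already done; (3) chain \eqref{crosslemma2} with $m \geq \frac{r-1}{2}n$ and $n \geq 4r$ to get $\cn(G) \geq \frac{(r-1)^3 r}{62.2}$; (4) bound $f(r) \leq \frac{(r-1)^2 r^2}{64}$ crudely (or more precisely $f(r) = \frac{1}{4}\lfloor r/2\rfloor\lfloor(r-1)/2\rfloor\lfloor(r-2)/2\rfloor\lfloor(r-3)/2\rfloor \leq \frac{r(r-1)(r-2)(r-3)}{64} < \frac{(r-1)^2 r^2}{64}$); (5) conclude by checking $\frac{(r-1)^3 r}{62.2} \geq \frac{(r-1)^2 r^2}{64}$, which reduces to $\frac{64}{62.2}(r-1) \geq r$, i.e. $r - 1 \geq 62.2$... wait, that reduces to $\frac{64(r-1)}{62.2} \ge r \iff 64r - 64 \ge 62.2 r \iff 1.8 r \ge 64 \iff r \ge 36$; so for $r$ between $13$ and $35$ one uses the sharper floor estimate for $f(r)$ together with $n \geq 4r$ and the extra additive $r - 3$ in the edge bound to close the gap.

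The main obstacle I expect is precisely this mid-range $13 \leq r \leq 35$: the crude asymptotic comparison is not quite enough, so one must be careful to (a) keep the $+(r-3)$ term in the Kostochka--Stiebitz bound rather than discarding it, and (b) use the true value $f(r) = \frac{1}{4}\lfloor r/2\rfloor\lfloor(r-1)/2\rfloor\lfloor(r-2)/2\rfloor\lfloor(r-3)/2\rfloor$ rather than the loose upper bound $\frac{r^4}{64}$. Plugging $m \geq \frac{r-1}{2}n + (r-3)$ and $n \geq 4r$ into \eqref{crosslemma2} gives $\cn(G) \geq \frac{1}{31.1 n^2}\left(\frac{(r-1)n}{2} + (r-3)\right)^3$, which as a function of $n$ is increasing for $n$ in the relevant range (the cube grows faster than $n^2$ once $m \gg n$), so the minimum over admissible $n$ is at $n = 4r$; evaluating there and comparing termwise with $f(r)$ should settle every $r \geq 13$, completing the proof. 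An alternative, cleaner for the write-up, is to avoid the real-valued Crossing Lemma constant entirely and instead use inequality \eqref{cn4}, $\cn(G) \geq 4m - \frac{103}{6}(n-2)$, together with $m \geq \frac{r-1}{2}n + r - 3$: this gives $\cn(G) \geq (2r - 2 - \frac{103}{6})n + 4(r-3) + \frac{103}{3}$, which is linear in $n$ with positive slope once $r \geq 10$, hence minimized at $n = 4r$, yielding $\cn(G) \geq 4r(2r - 2 - \frac{103}{6}) + 4r - 12 + \frac{103}{3} = 8r^2 - O(r)$ --- but this is only quadratic in $r$, so it does \emph{not} dominate $f(r) \sim r^4/64$ for large $r$, and therefore the Crossing-Lemma route \eqref{crosslemma2} is genuinely needed. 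I would therefore commit to the \eqref{crosslemma2} approach and treat the finitely many values $13 \leq r \leq 35$ by direct substitution.
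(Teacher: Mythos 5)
Your overall strategy --- an edge lower bound for $r$-critical graphs, the Pach \etal~form of the Crossing Lemma \eqref{crosslemma2}, the hypothesis $n \ge 4r$, and the upper bound $f(r) \ge \cn(K_r)$ --- is exactly the paper's, but two points in your execution need repair. First, your step (2) contains an off-by-one error: $\frac{r-1}{2} \ge \frac{103}{16}$ requires $r \ge 14$, not $r \ge 13$. For $r=13$ the minimum-degree bound gives only $m \ge 6n < \frac{103}{16}n = 6.4375n$, and the Kostochka--Stiebitz additive term $r-3=10$ cannot close that gap once $n \ge 4r = 52$. So inequality \eqref{crosslemma2} is simply not available at $r=13$, and that case must be handled by a different inequality; the paper uses \eqref{cn4}, which with $m \ge 6n$ gives $\cn(G) \ge 24n - \frac{103}{6}(n-2) = \frac{41}{6}n + \frac{103}{3} > 389 > 225 = f(13) \ge \cn(K_{13})$ for $n \ge 52$.

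Second, the ``mid-range obstacle'' for $13 \le r \le 35$ that you propose to settle by direct substitution is an artifact of your lossy step (4). Bound $f(r)$ by $\frac{1}{64}r(r-1)(r-2)(r-3)$ rather than by $\frac{1}{64}r^2(r-1)^2$. Then, using only the minimum-degree bound $m \ge \frac{r-1}{2}n$ (the extra $+(r-3)$ is not needed), the chain $\cn(G) \ge \frac{1}{31.1}\frac{m^3}{n^2} \ge \frac{(r-1)^3}{8\cdot 31.1}\,n \ge \frac{(r-1)^3 r}{62.2} \ge \frac{(r-1)^3 r}{64} \ge \frac{r(r-1)(r-2)(r-3)}{64} \ge f(r) \ge \cn(K_r)$ holds for every $r \ge 14$, the key point being $(r-2)(r-3) \le (r-1)^2$. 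No case analysis and no term-by-term check over $13 \le r \le 35$ is required; this clean comparison is precisely how the paper closes the argument. Your observation that the linear bound \eqref{cn4} alone cannot work for large $r$ is correct and is why the paper reserves it for $r=13$ only.
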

\begin{proof}
We have shown that $\cn(G)\ge \cn(K_r)$ holds for $r\le 12$.  If $r=13$, then inequality~\eqref{cn4} easily implies the proposition.  Thus, we may assume $r\ge 14$.  
Let $m$ be the number of edges in $G$.
Since $G$ is $r$-critical, $m \geq n(r-1)/2$. In particular, $m \geq 6.5n > \frac{103}{16}n$. Therefore, the bound (\ref{crosslemma2}) gives 
\begin{align*}
\cn(G) &\geq \frac{1}{31.1}\frac{m^3}{n^2} \geq \frac{1}{8 \cdot 31.1}(r-1)^3n \geq \frac{1}{64}(r-1)^3r \\
& ~\vspace{-1in}\\
&\geq 
\frac{1}{4}\floor{\frac{r}{2}}\floor{\frac{r-1}{2}} \floor{\frac{r-2}{2}} \floor{\frac{r-3}{2}}\geq \cn(K_r).
\end{align*}
\end{proof}

Without assuming any lower bound on $n$,
we can prove that $\cn(G) \geq (r-1)^4/2^8$ if $G$ has chromatic number $r \geq 14$.  This immediately implies $\chi(G)  \leq 1+4\cn(G)^{1/4}$. 

We think that if $G$ has chromatic number $r$ and does not contain $K_r$, then $\cn(G)-\cn(K_r)$ is not only nonnegative, but is at least cubic in $r$. Recall that $K_{r+2} \setminus C_5$ is $r$-critical and note that it is a subgraph of $K_{r+2}$; hence, if Guy's conjecture on the crossing number of $K_r$ is true, then $K_{r+2}\setminus C_5$ shows that $\cn(G)-\cn(K_r)$ can be as small as cubic in $r$. 

\paragraph{\bf Acknowledgment.}
We thank Sasha Kostochka for helpful discussions on excess in critical graphs.

\end{document}